\documentclass[11pt]{article}
\usepackage{amssymb,amsmath,amsfonts, amscd}
\usepackage{array}
\usepackage{amsmath}
\usepackage{amssymb}

\newtheorem{theorem}{Theorem}

\newtheorem{lemma}[theorem]{Lemma}

\newtheorem{definition}[theorem]{Definition}

\setlength{\textwidth}{17cm} \setlength{\oddsidemargin}{-0.1 in}
\setlength{\evensidemargin}{-0.1 in} \setlength{\topmargin}{0.0 in}

\newcommand{\qedbox}{$\blacksquare$ \newline}
\newenvironment{proof}%
{%
\noindent{\it Proof.} } { \hfill\qedbox }

\newcommand{\proofend}{\hfill\qedbox}

\newcommand{\itsp}{\vspace{-0pt}\item}

\linespread{1.0}
\input epsf

\title{On Avoider-Enforcer games}
\author{J\'ozsef Balogh\thanks{Department of Mathematics, University
of Illinois at Urbana-Champaign, Urbana, IL, 61801,
\texttt{jobal@math.uiuc.edu}. This author's research supported in
part by NSF CAREER Grant DMS-0745185 and DMS-0600303, UIUC Campus Research Board Grants 06139, 07048 and 08086, and OTKA grant 049398.} \and Ryan Martin\thanks{Department of Mathematics, Iowa
State University, Ames, IA 50011, \texttt{rymartin@iastate.edu}.
This author's research supported in part by NSA grant
H98230-05-1-0257.}}

\date{\today}

\newcommand{\PP}{\mathcal{P}}
\newcommand{\C}{\mathcal{C}}
\newcommand{\NC}{\mathcal{NC}}
\newcommand{\F}{\mathcal{F}}
\newcommand{\E}{\mathcal{E}}
\newcommand{\A}{\mathcal{A}}
\newcommand{\B}{\mathcal{B}}

\newcommand{\pnf}{{\mathcal{P}_n(\mathcal{F})}}
\newcommand{\pnfind}{{\mathcal{P}_n^{\rm ind}(\mathcal{F})}}

\begin{document}
\maketitle

\begin{abstract}
In the Avoider-Enforcer game on the complete graph $K_n$, the
players (Avoider and Enforcer) each take an edge in turn.  Given a graph property $\mathcal{P}$, Enforcer wins the game if Avoider's graph has the property $\mathcal{P}$.  An important parameter is $\tau_E({\cal P})$, the smallest integer $t$ such that Enforcer can win the game against any opponent in $t$ rounds.

In this paper, let $\mathcal{F}$ be an arbitrary family of graphs and $\mathcal{P}$ be the property that a member of $\mathcal{F}$ is a subgraph or is an induced subgraph.  We determine the
asymptotic value of $\tau_E(\mathcal{P})$ when $\mathcal{F}$
contains no bipartite graph and establish that
$\tau_E(\mathcal{P})=o(n^2)$ if $\mathcal{F}$ contains a bipartite
graph.

The proof uses the game of JumbleG and the Szemer\'edi Regularity Lemma.
\end{abstract}

\section{Introduction}

An \textbf{unbiased positional game} is one in which two players
alternately select a vertex from a hypergraph. In the most common
formulation of the game, one player is Maker and the other Breaker.
Maker attempts to occupy all vertices in some hyperedge and Breaker
attempts to occupy at least one vertex in every hyperedge.  In the
graph context, players select edges of a complete graph $K_n$ or possibly some
other graph, such as a random graph \cite{Gnp}. Here Maker attempts
to create a graph with a given monotone property and Breaker
attempts to prevent Maker from achieving this.

Hefetz, Krivelevich and Szab\'o \cite{AvEn} recently investigated
the so-called \textbf{Avoider-Enforcer game}.  The players are,
unsurprisingly, Avoider and Enforcer.  In the context of graph
games, Avoider attempts to prevent having her edges induce a graph
with a property $\PP$ for as many rounds as possible. Enforcer
selects his edges in such a way as to force \textbf{Avoider's graph}
to have property $\PP$ as early as possible.

At the end of $r$ rounds, Avoider and Enforcer each have chosen
exactly $r$ edges.  Let $\tau_E(\PP)$ be the smallest integer $t$
such that Enforcer can win the $\PP$-property game in $t$ rounds.
Let $\C_n^k$ denote the property that an $n$-vertex graph is
$k$-colorable and let $\NC_n^k$ denote the property that an
$n$-vertex graph is not $k$-colorable.

In \cite{Conj2} (see also \cite{Conj1} for similar questions on Maker-Breaker games), Hefetz, Krivelevich, Stojakovi\'c and Szab\'o
establish that
$$
\frac{n^2}{8}+\frac{n-2}{12}\leq\tau_E(\NC_n^2)\leq\frac{n^2}{8}+\frac{n}{2}+1,
$$
but for $k\geq 3$  show only that
$$
(1-o(1))\frac{(k-1)n^2}{4k}\leq\tau_E(\NC_n^k)<\frac{1}{2}\binom{n}{2}
. $$ They state that it ``seems reasonable'' that
$\tau_E(\NC_n^k)\leq (1+o(1))\frac{(k-1)n^2}{4k}$.

In this paper, we prove that this is the case, as a consequence of a
stronger result.

\subsection{Main Results}
Let $t(n,k)$ denote the Tur\'an number, which is the maximum number of
edges in a graph on $n$ vertices with no $K_{k+1}$.  In particular, Tur\'an's theorem gives
$\left(\frac{k-1}{k}\right)\frac{n^2}{2}-\frac{k}{8}\leq t(n,k)\leq
\left(\frac{k-1}{k}\right)\frac{n^2}{2}$.

\begin{definition}
  For a family  of graphs, $\mathcal{F}$, denote
   $\pnf$ to be the property that a graph on
  $n$ vertices has a copy of some member of
  $\F$ as a subgraph and let $\pnfind$ be the property that a graph
  on $n$ vertices has a copy of some member of $\F$ as an
  \textbf{induced} subgraph.
\end{definition}

\begin{theorem}
  Let $\F$ be a family of graphs such that $k=\min\{\chi(F) : F\in\F\}\geq 3$.
    In the Avoider-Enforcer game for properties $\pnf$ and $\pnfind$, for $n$  large enough, we have
  $$ \left(\frac{k-2}{k-1}\right)\frac{n^2}{4}-O(k)=
     \left\lfloor\frac{1}{2} t(n,k-1)\right\rfloor\leq
     \tau_E(\pnf)\leq\tau_E(\pnfind)\leq
     \left(\frac{k-2}{k-1}\right)
     \frac{n^2}{4}+o(n^2) . $$
  In addition, the last two inequalities hold if $k=2$.
   \label{mainth}
\end{theorem}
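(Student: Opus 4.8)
The plan is to treat the four assertions in order of increasing difficulty, saving the upper bound for last.

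The Tur\'an identity and the lower and middle inequalities are quick. For the identity, substitute the stated bounds $\left(\frac{k-2}{k-1}\right)\frac{n^2}{2}-\frac{k-1}{8}\le t(n,k-1)\le\left(\frac{k-2}{k-1}\right)\frac{n^2}{2}$ into $\lfloor\frac12 t(n,k-1)\rfloor$, giving $\lfloor\frac12 t(n,k-1)\rfloor=\left(\frac{k-2}{k-1}\right)\frac{n^2}{4}-O(k)$. For $\tau_E(\pnf)\le\tau_E(\pnfind)$, note that an induced copy of a member of $\F$ is in particular a copy, so property $\pnfind$ implies property $\pnf$; hence any Enforcer strategy forcing $\pnfind$ within $t$ rounds also forces $\pnf$. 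For the lower bound I would give an Avoider strategy: fix at the outset a balanced partition of $V(K_n)$ into $k-1$ parts and have Avoider always claim a still-available edge crossing this partition. Her graph then stays $(k-1)$-partite, hence $(k-1)$-colourable, and since every $F\in\F$ has $\chi(F)\ge k$, no $F$ can appear even as a subgraph. This can fail only when all crossing edges are gone on Avoider's turn; as there are exactly $t(n,k-1)$ crossing edges and at most two disappear per round, she sustains this for $\lfloor\frac12 t(n,k-1)\rfloor$ rounds.

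The substance is the upper bound $\tau_E(\pnfind)\le\left(\frac{k-2}{k-1}\right)\frac{n^2}{4}+o(n^2)$. Fix $F\in\F$ with $\chi(F)=k$ and $f=|V(F)|$, and set $r^\ast=\left(\frac{k-2}{k-1}\right)\frac{n^2}{4}+o(n^2)$; at round $r^\ast$ Avoider's graph $A$ has density $d=r^\ast/\binom{n}{2}=\frac{k-2}{2(k-1)}+o(1)$, a constant bounded away from $0$ and $1$ for every fixed $k\ge 3$. The strategy is for Enforcer to play the JumbleG game so as to force $A$, by round $r^\ast$, to be $\epsilon$-jumbled at density $d$: for all disjoint $S,T$ with $|S|,|T|\ge\gamma n$ one has $e_A(S,T)=(d\pm\epsilon)|S||T|$, and similarly $e_A(S)=(d\pm\epsilon)\binom{|S|}{2}$. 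I would then apply the Szemer\'edi Regularity Lemma to $A$ to get an equitable partition $V_1,\dots,V_m$; jumbledness forces every pair $(V_i,V_j)$ to be $\epsilon'$-regular of density in $(d-\epsilon,d+\epsilon)\subseteq(\delta,1-\delta)$. Finally I would invoke the induced version of the embedding (key) lemma on any $f$ of the clusters: since every relevant pair is regular of density bounded away from both $0$ and $1$, each edge of $F$ routes through a pair of density $\ge\delta$ and each non-edge through one of density $\le 1-\delta$, yielding an induced copy of $F$ in $A$. Thus $A$ has property $\pnfind$ and Enforcer has won by round $r^\ast$. For $k=2$ the chosen $F$ is bipartite, $t(n,1)=0$, and the target density is $o(1)$; the same scheme applies with Enforcer forcing a sparse jumbled graph, which still contains the prescribed induced bipartite $F$, giving the $o(n^2)$ bound.

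The main obstacle is the JumbleG step: forcing Avoider's graph---built by a player actively trying to stay structured---to be genuinely pseudorandom, and at density exactly $d$ by round $r^\ast$. This is delicate precisely because the lower bound shows that an Avoider who confines herself to the crossing edges of a single $(k-1)$-partition keeps her graph far from jumbled until round $\approx\frac12 t(n,k-1)$; reconciling this with Enforcer's goal requires that every Avoider strategy avoiding jumbledness be driven, by edge depletion, off \emph{all} such partite structures by exactly this round, which is why a soft counting argument does not suffice and the JumbleG machinery is needed. A secondary difficulty is that the induced embedding demands control of non-edges as well as edges---densities bounded away from $1$, not merely from $0$---so Enforcer must at once cap Avoider's density in every region and force a positive density there, the former being easy and the latter being the real content of JumbleG.
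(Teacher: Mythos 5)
The Tur\'an identity, the lower bound, and the middle inequality are fine and coincide with the paper's treatment. The upper bound, however, has a genuine gap at exactly the point you flag as ``the main obstacle'': you assert that Enforcer can force Avoider's graph to be two-sidedly jumbled at the global density $d$, i.e.\ $e_A(S,T)=(d\pm\epsilon)|S||T|$ for all large disjoint $S,T$. This is false, and your own discussion of the lower bound already refutes it: an Avoider who claims only edges crossing a fixed balanced $(k-1)$-partition keeps her graph exactly $(k-1)$-partite --- with density $0$ inside each part --- for at least $\lfloor\frac12 t(n,k-1)\rfloor$ rounds, which is asymptotically the same as your target round $r^\ast$. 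No Enforcer strategy can prevent this, so ``edge depletion drives Avoider off all partite structures by round $r^\ast$'' cannot be made into a proof; at round $r^\ast$ her graph may still be very far from jumbled. What JumbleG actually yields (and all it can yield about Avoider's graph) is one-sided: in every large pair $(S,T)$ Avoider holds at most a $(1/2+\epsilon)$-fraction of the potential edges, with no lower bound and no relation to the global density $d$.

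The missing idea is how to pass from this one-sided cap to an embedding. The paper's route: apply the Regularity Lemma to Avoider's graph, form the reduced graph $H$ on the clusters whose edges are the pairs of density at least $\delta+\epsilon$, and observe that, because every pair carries density at most $1/2+\epsilon$, a graph with $\left(\frac{k-2}{4(k-1)}+2\delta\right)n^2$ edges forces $e(H)>\frac{k-2}{k-1}\cdot\frac{\ell^2}{2}$; Tur\'an's theorem then produces a $K_k$ in $H$, i.e.\ $k$ clusters that are pairwise $\delta$-dense. A second application of the Regularity Lemma inside each of these $k$ clusters, together with the Slicing Lemma, yields $f$ pairwise regular subclusters into which Lemma~\ref{regclique} embeds $F$ induced: edges of $F$ run between different color classes of $F$, hence through pairs of density at least $\delta/2$, while non-edges are safe through \emph{every} pair because the JumbleG cap bounds all densities by $1/2+\epsilon\le 1-\eta$. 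This is precisely how the induced statement survives without any lower bound on within-cluster densities; your sketch instead requires density bounded away from $0$ in every pair of a single partition, which Enforcer cannot guarantee. (The $k=2$ case then falls out of the same counting, since $e(H)>0$ already gives one dense pair; no separate ``sparse jumbled'' argument is available or needed.)
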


The last inequality in Theorem \ref{mainth} is our main result.
Enforcer's strategy is simple, he tries to achieve to maintain that
the board ``looks like a random graph'' after each round. We simply
show that in this case Avoider's graph when it has
$\frac{k-2}{k-1}\left(\frac{n^2}{4}\right)+o(n^2)$ edges, will
satisfy property $\pnfind$. The answer to the question of Hefetz, Krivelevich, Stojakovi\'c and Szab\'o is a corollary of Theorem~\ref{mainth}:

\begin{theorem}
  Let $k\geq 2$ be an integer and let $\NC_n^k$ be the property that
  a graph is not $k$-colorable.  In the Avoider-Enforcer game,
  $$ \left(\frac{k-1}{k}\right)\frac{n^2}{4}-O(k)=
  \left\lfloor\frac{1}{2} t(n,k)\right\rfloor\leq
     \tau_E(\NC_n^k)\leq
     \left(\frac{k-1}{k}\right)\frac{n^2}{4}+o(n^2) . $$
  \label{color}
  In addition, the last two inequalities hold if $k=1$.
\end{theorem}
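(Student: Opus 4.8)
The plan is to deduce Theorem~\ref{color} directly from Theorem~\ref{mainth} by realizing ``not $k$-colorable'' as an ordinary subgraph-containment property. Set
$$ \F = \{F : \chi(F) = k+1\}, $$
a (typically infinite, but by hypothesis arbitrary) family of graphs. First I would check that $\NC_n^k$ and $\pnf$ define the same set of $n$-vertex graphs. If $G$ contains some $F\in\F$ as a subgraph, then $\chi(G)\geq\chi(F)=k+1$, so $G$ is not $k$-colorable. Conversely, if $\chi(G)\geq k+1$, delete edges of $G$ one at a time until the chromatic number first drops below $k+1$; the graph $H\subseteq G$ just before the drop is edge-critical with $\chi(H)=k+1$, so $H\in\F$ and $G$ contains a member of $\F$. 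Hence the two properties coincide and $\tau_E(\NC_n^k)=\tau_E(\pnf)$.

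With this family one has $k':=\min\{\chi(F):F\in\F\}=k+1$, so for $k\geq 2$ we get $k'\geq 3$ and Theorem~\ref{mainth} applies. Substituting $k'$ for the parameter of that theorem, the lower bound uses $t(n,k'-1)=t(n,k)$ and the upper bound uses $\tfrac{k'-2}{k'-1}=\tfrac{k-1}{k}$, so the chain
$$ \left\lfloor \tfrac{1}{2}\, t(n,k) \right\rfloor \leq \tau_E(\pnf) \leq \tau_E(\pnfind) \leq \left(\tfrac{k-1}{k}\right)\frac{n^2}{4} + o(n^2) $$
specializes exactly to the assertion of Theorem~\ref{color} once $\tau_E(\pnf)$ is identified with $\tau_E(\NC_n^k)$. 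The relevant estimate is the subgraph bound $\tau_E(\pnf)$ rather than the induced version, which is appropriate because chromatic number is monotone under edge addition and so depends only on ordinary subgraphs.

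For $k=1$ the same construction with $\F=\{F:\chi(F)=2\}$ (the graphs having at least one edge) gives $\NC_n^1=\pnf$ with $k'=2$; Theorem~\ref{mainth} guarantees its last two inequalities in this case, yielding $\tau_E(\NC_n^1)\leq o(n^2)$, consistent with the trivial observation that Avoider's graph ceases to be $1$-colorable the moment she owns a single edge. Since the whole argument is a translation, I do not anticipate a genuine obstacle: the only care needed is the index bookkeeping between the parameter $k+1$ of the family $\F$ and the parameter $k$ in $\NC_n^k$, and the verification that chromatic number is captured by subgraphs so that the $\tau_E(\pnf)$ bound is the one being invoked.
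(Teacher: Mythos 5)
Your proposal is correct and essentially matches the paper's own derivation, which likewise obtains Theorem~\ref{color} as an immediate corollary of Theorem~\ref{mainth}: the paper takes $\F=\{K_{k+1}\}$ (so the upper bound follows because containing $K_{k+1}$ forces non-$k$-colorability, and the lower bound reuses Avoider's $k$-partition strategy), whereas you take $\F$ to be all $(k+1)$-chromatic graphs so that $\pnf$ coincides with $\NC_n^k$ outright. The difference is only in the choice of family; both are valid specializations of Theorem~\ref{mainth}.
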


Let $\F=\{K_{k+1}\}$. The upper bound for Theorem \ref{color} is an
immediate consequence of Theorem~\ref{mainth}.  The lower bound
comes from the same trivial strategy that establishes the lower
bound in Theorem~\ref{mainth}.

This result is weaker than that in \cite{Conj2} for $k=2$, but gives
the correct asymptotic approximation for all $k\geq 2$.

This paper is organized as follows: In Section \ref{sec:jumbleg}, we
describe the game of JumbleG \cite{JumbleG}, along with important
consequences thereof. In Section \ref{sec:reglem}, we state the
version of the Szemer\'edi Regularity Lemma that is useful for our
purposes. Section \ref{sec:proofs} contains the proofs of the
theorems. Section \ref{sec:conc} contains some concluding remarks.

\section{The game of JumbleG}
\label{sec:jumbleg}

The game of JumbleG is a traditional Maker-Breaker game.  The goal
of Maker is to create a pseudo-random graph.  Frieze, Krivelevich,
Pikhurko and Szab\'o \cite{JumbleG} use two different conditions for
pseudo-randomness.  We will use their first definition, including the use of ``$\epsilon$-regular'' to describe a graph, which is not to be confused with $\epsilon$-regular pairs, as defined by Szemer\'edi's Regularity Lemma.

\begin{definition}  If $G$ is a graph and $S,T$ are disjoint vertex
sets in $V(G)$, then denote by $e_G(S,T)$ to be the number of edges
which have one endpoint in $S$ and the other in $T$.

   A   pair of disjoint vertex sets $(S,T)$ is
   \textbf{$\epsilon$-unbiased} if
   $$ \left|\frac{e_G(S,T)}{|S||T|}-\frac{1}{2}\right|\leq\epsilon. $$

  A graph $G$ on $n$ vertices, with minimum degree $\delta(G)$,
  is \textbf{$\epsilon$-regular} if both:
   \begin{itemize}
      \item[\textbf{P1}] $\delta(G)\geq (1/2-\epsilon)n$.
      \item[\textbf{P2}] Any pair $S,T$ of disjoint subsets of $V(G)$ with
      $|S|,|T|>\epsilon n$ is $\epsilon$-unbiased.
   \end{itemize}
\end{definition}

Formally, the game of JumbleG$(\epsilon)$ is won by Maker if Maker can ensure that his graph is an $\epsilon$-regular graph.

\begin{theorem}[\cite{JumbleG}]\label{thm:JumbleG}
   Maker has a winning strategy in JumbleG$(\epsilon)$, provided
   $\epsilon\geq 2(\log n/n)^{1/3}$ and $n$ is sufficiently large.
\end{theorem}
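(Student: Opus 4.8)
The plan is to reduce the two defining conditions \textbf{P1} and \textbf{P2} to a single \emph{balancing} (discrepancy) game in which Maker, in the role of balancer, keeps the number of his edges inside each of a prescribed family of target sets close to one half of that set's size. For \textbf{P1} the targets are the $n$ stars $E_v=\{vw:w\neq v\}$: guaranteeing $|2\,d_M(v)-(n-1)|\le 2\epsilon n$ for every $v$ is exactly $\delta(G)\ge(1/2-\epsilon)n$. For \textbf{P2} the naive family---the edge sets $E(S,T)$ over all disjoint pairs with $|S|,|T|>\epsilon n$---is too large: there are $2^{\Theta(n)}$ of them, and a union bound over that many targets costs a factor $\sqrt{n}$ in the achievable discrepancy, more than the slack permits.

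The first key step is therefore a reduction to pairs of the \emph{smallest admissible and equal} size $s_0:=\lceil\epsilon n\rceil$. Suppose $|e_M(S,T)-\tfrac12 s_0^2|\le\epsilon s_0^2$ for every disjoint pair with $|S|=|T|=s_0$. Given arbitrary disjoint $S',T'$ with $|S'|,|T'|\ge s_0$, pick uniformly random $s_0$-subsets $S\subseteq S'$ and $T\subseteq T'$; since every realization satisfies the bound and $\mathbb{E}[e_M(S,T)]=\frac{s_0^2}{|S'||T'|}\,e_M(S',T')$, taking expectations gives $|e_M(S',T')/(|S'||T'|)-\tfrac12|\le\epsilon$ with no boundary terms at all. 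Hence it suffices to balance only the $s_0\times s_0$ pairs, of which there are at most $\binom{n}{s_0}^2=e^{O(\epsilon n\log n)}$.

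Next I would set up Maker's balancing strategy by the potential (weight-function) method in the style of Erd\H{o}s--Selfridge and Beck. For a parameter $\lambda_A$ attached to each target $A$ of size $n_A$, track $x_A=(\text{Maker's count in }A)-(\text{Breaker's count in }A)$ and the number $u_A$ of still-unclaimed elements of $A$, and form an exponential potential $\Phi=\sum_A[(1+\lambda_A)^{x_A}+(1+\lambda_A)^{-x_A}]\,\rho_A^{\,u_A}$, where $\rho_A<1$ is the standard martingale-correction factor chosen so that $\Phi$ is a supermartingale under random play. The crux is that on each move Maker can select a free edge keeping $\Phi$ from increasing, so $\Phi$ never exceeds its initial value, which for $\lambda_A=\Theta(\sqrt{\ln m/n_A})$ is $m^{O(1)}$ (here $m$ is the number of targets). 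At the end the board is exhausted, so $x_A=2\,|M\cap A|-n_A$, and isolating one term gives $(1+\lambda_A)^{|x_A|}\le m^{O(1)}$, i.e.\ $|x_A|=O(\ln m/\lambda_A)=O(\sqrt{n_A\ln m})$ for every $A$. I expect this game-theoretic balancing step---producing a \emph{deterministic} Maker strategy that simultaneously controls exponentially many targets of two different sizes, together with verifying the supermartingale inequality termwise---to be the main obstacle; the probabilistic intuition is transparent, but turning it into an adversary-proof strategy with explicit constants is the technical heart.

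Finally I would check the arithmetic. For the \textbf{P2}-targets $n_A=s_0^2=\Theta(\epsilon^2 n^2)$ and $\ln m=O(\epsilon n\log n)$, so $\sqrt{n_A\ln m}=O(\sqrt{\epsilon^3 n^3\log n})=O(n\log n)$, whereas the permitted discrepancy is $2\epsilon s_0^2=\Theta(\epsilon^3 n^2)=\Theta(n\log n)$; the hypothesis $\epsilon\ge 2(\log n/n)^{1/3}$ is exactly what makes the constant fall in Maker's favour. For the star targets $n_A=n-1$ and $m=n$, giving $\sqrt{n_A\ln m}=O(\sqrt{n\log n})$, comfortably below the permitted $2\epsilon n$. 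Combining the two, Maker's graph satisfies both \textbf{P1} and \textbf{P2}, so it is $\epsilon$-regular and Maker wins JumbleG$(\epsilon)$.
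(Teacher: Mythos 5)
The paper does not actually prove Theorem~\ref{thm:JumbleG}---it is imported verbatim from Frieze, Krivelevich, Pikhurko and Szab\'o \cite{JumbleG}---so the only meaningful comparison is with that source, and your outline is essentially its proof: an averaging reduction to disjoint pairs of the minimal size $\lceil \epsilon n\rceil$, followed by a Beck-type discrepancy (balancing) theorem, itself established by the exponential-potential supermartingale argument you sketch, applied simultaneously to the $n$ stars and the $e^{O(\epsilon n\log n)}$ minimal pairs, with the exponent $1/3$ emerging from the comparison of $\sqrt{s_0^2\cdot\epsilon n\log n}$ against the permitted slack $\epsilon s_0^2$ exactly as you compute. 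The plan and the arithmetic are sound; the one step you defer---the termwise verification that Maker can always choose a free edge keeping $\Phi$ from increasing---is precisely the content of Beck's balancing theorem, so nothing in your reduction is at risk, though a self-contained writeup would still have to supply that verification.
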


In our proofs, Enforcer will simply play as Maker in the game of
JumbleG$(\epsilon)$. 
Note that the game stops before all the edges are claimed. So Theorem~\ref{thm:JumbleG} is used so that if the game were continued further, then from the resulting graph, Enforcer could win the game. As a result, between any pair of disjoint sets, both of them large enough, Avoider cannot occupy too many of the edges.  We formalize this as follows.

\begin{definition} After round $r$ in the game, let
$e_B(S,T;r)$ denote the number
   of edges that Breaker (Avoider) occupies in the pair $(S,T)$ and $e_M(S,T;r)$
   denote the number of edges that Maker (Enforcer) occupies.
\end{definition}
\begin{lemma}
   Let $S,T\subset V(G)$ be disjoint sets and $|S|,|T|\geq\epsilon
   n$.   If Maker plays
   a strategy to win JumbleG$(\epsilon)$, then for every  $r$
   $$ e_B(S,T;r)-e_M(S,T;r)\leq2\epsilon|S||T| +1 . $$
   \label{jumble}
\end{lemma}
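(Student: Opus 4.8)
The plan is to use Maker's winning strategy for JumbleG$(\epsilon)$ as a guarantee about \emph{every} continuation of the partial game. Fix the round $r$ and the disjoint sets $S,T$ with $|S|,|T|\ge\epsilon n$, and write $a=e_M(S,T;r)$, $b=e_B(S,T;r)$ for the edges each player already occupies between $S$ and $T$, and $u=|S||T|-a-b$ for the number of such edges still unclaimed. Since by Theorem~\ref{thm:JumbleG} Maker has a winning strategy $\sigma$ in JumbleG$(\epsilon)$, I would continue the actual game from its state after round $r$, letting Enforcer (Maker) follow $\sigma$ and letting Avoider (Breaker) switch, from now on, to a specific auxiliary strategy. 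Because $\sigma$ wins against \emph{every} Breaker, the graph Maker holds at the very end of this continuation is $\epsilon$-regular; in particular property \textbf{P2} applies to the pair $(S,T)$.

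The auxiliary Breaker strategy is the crux: Breaker claims an edge of the pair $(S,T)$ on every one of its turns for which such an edge is still available, and plays arbitrarily otherwise. Let $M_{\rm end}$ and $B_{\rm end}$ denote the final numbers of pair-edges held by Maker and Breaker, so $M_{\rm end}=a+\Delta a$ and $B_{\rm end}=b+\Delta b$, where $\Delta a,\Delta b\ge0$ count the pair-edges each side takes during the continuation and $\Delta a+\Delta b=u$. Applying \textbf{P2} to $(S,T)$ gives $M_{\rm end}\ge(1/2-\epsilon)|S||T|$, hence $B_{\rm end}=|S||T|-M_{\rm end}\le(1/2+\epsilon)|S||T|$ and so $B_{\rm end}-M_{\rm end}\le 2\epsilon|S||T|$.

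It remains to control the difference between the two deltas. The number of available pair-edges only decreases as the continuation proceeds, so the turns at which a pair-edge is available form an initial segment of the alternating move sequence; in any such segment the numbers of Maker-turns and Breaker-turns differ by at most one. Since Breaker takes a pair-edge on every turn on which one is available, $\Delta b$ equals the number of Breaker-turns in this segment, while $\Delta a$ is at most the number of Maker-turns in it; therefore $\Delta a\le\Delta b+1$. Finally,
\[
 b-a=(B_{\rm end}-\Delta b)-(M_{\rm end}-\Delta a)=(B_{\rm end}-M_{\rm end})+(\Delta a-\Delta b)\le 2\epsilon|S||T|+1,
\]
which is the claim.

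I expect the main obstacle to be recognizing that monotonicity alone is insufficient: knowing only that $b\le B_{\rm end}\le(1/2+\epsilon)|S||T|$ together with $a\ge0$ yields a bound far weaker than $2\epsilon|S||T|+1$, because in an uncontrolled continuation Maker could grab most of the remaining pair-edges and thereby make the round-$r$ imbalance $b-a$ look much larger than the final imbalance. The resolution is precisely to let Breaker keep pace \emph{inside} the pair, so that the estimate $\Delta a-\Delta b\le1$ transfers the final $\epsilon$-regularity bound back to round $r$ up to an additive error of $1$.
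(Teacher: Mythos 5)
Your proposal is correct and follows essentially the same route as the paper: continue the game with Enforcer playing his JumbleG$(\epsilon)$ winning strategy while Breaker claims edges of $(S,T)$ whenever possible, then transfer the final $\epsilon$-unbiasedness of the pair back to round $r$. Your bookkeeping via $\Delta a-\Delta b\le 1$ is just a direct rephrasing of the paper's count that Breaker secures at least $\left\lfloor\tfrac{1}{2}\left(|S||T|-e_B(S,T;r)-e_M(S,T;r)\right)\right\rfloor$ of the unclaimed pair-edges, which the paper runs as a proof by contradiction.
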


\begin{proof}
Fix any integer $r>0$.   Suppose
$e_B(S,T;r)-e_M(S,T;r)>2\epsilon|S||T|+1$. From this point on, we
let Breaker to occupy edges only between $S$ and $T$ whenever
possible. When the game concludes, the number of edges that
   Breaker occupies between $S$ and $T$ is at least
   \begin{eqnarray*}
      \lefteqn{e_B(S,T;r)
               +\left\lfloor\frac{1}{2}\left(|S||T|-e_B(S,T;r)-e_M(S,T;r)
                                       \right)\right\rfloor} \\
      & \geq & e_B(S,T;r)
               +\frac{1}{2}\left(|S||T|-e_B(S,T;r)-e_M(S,T;r)
                           \right)-\frac{1}{2}\\
      & > & \frac{1}{2}|S||T|
               +\frac{1}{2}\left(2\epsilon|S||T|+1\right)-\frac{1}{2}
               \\
      & \geq & \left(\frac{1}{2}+\epsilon\right)|S||T| .
   \end{eqnarray*}
   This contradicts the assumption that Maker played a winning
   strategy for JumbleG$(\epsilon)$.
\end{proof}

\section{The Regularity Lemma}
\label{sec:reglem}

\begin{definition}
   Let $A$ and $B$ be disjoint vertex sets.  The number of edges
   with one endpoint in $A$ and the other in $B$ is denoted by
   $e(A,B)$.
   The \textbf{density} of $(A,B)$ is denoted by
   $$ d(A,B)=\frac{e(A,B)}{|A||B|} . $$
   A pair $(A,B)$ is \textbf{$\alpha$-regular} if, for every
   $X\subseteq A$ and $Y\subseteq B$ with $|X|>\alpha|A|$ and
   $|Y|>\alpha|B|$,
   $$ |d(A,B)-d(X,Y)|<\alpha . $$
A partition $V_1,\ldots,V_\ell$ is an \textbf{equipartition} of
$V$ if for every $i,j$ we have $\left||V_i|-|V_j|\right|\le 1$.
\end{definition}

We use a form of the Szemer\'edi Regularity Lemma, introduced by Alon,
Fischer, Krivelevich and M. Szegedy \cite{reglemplus}.
Lemma~\ref{reglem} is a simplified version.

\begin{lemma}[Regularity Lemma~\cite{reglemplus}]\label{reglem}
   For every integer $m$ and constant
   $\E>0$, there is an $S=S(m,\E)$ which satisfies the
   following:  For any graph $G$ on $n\geq S$ vertices, there exists
   an equipartition $\A=\{V_i : 1\leq i\leq\ell\}$ of $V(G)$ and an
   induced subgraph $U$ of $G$, with an equipartition $\B=\{U_i :
   1\leq i\leq\ell\}$ of the vertices of $U$, that satisfy:
   \begin{itemize}
      \itsp $m\leq\ell\leq S$.
      \itsp $U_i\subseteq V_i$ and $|U_i|=L\geq
      \lceil n/S\rceil$, for all $i\geq 1$.
      \itsp In the equipartition $\B$, \textbf{all} pairs are
      $\E$-regular.
      \itsp All but at most $\E\binom{\ell}{2}$ of the pairs $1\leq
      i<j\leq\ell$ are such that $|d(V_i,V_j)-d(U_i,U_j)|<\E$.
   \end{itemize}
\end{lemma}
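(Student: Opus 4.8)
The plan is to derive this two-partition statement from the classical Szemer\'edi Regularity Lemma by an energy-increment (defect Cauchy--Schwarz) argument, followed by a probabilistic selection of a transversal of sub-clusters. For a partition $\mathcal{R}=\{R_1,\ldots,R_t\}$ of $V(G)$ write $\mathrm{ind}(\mathcal{R})=\sum_{a<b}\frac{|R_a||R_b|}{n^2}d(R_a,R_b)^2$ for its index (mean-square density), which always lies in $[0,1/2]$. First I would build a sequence of equipartitions $\mathcal{R}_0\prec\mathcal{R}_1\prec\cdots$, each refining the previous, by repeatedly applying the classical regularity lemma in its refinement form: given $\mathcal{R}_i$ with $t_i$ parts, apply the lemma with a regularity parameter $\eta(t_i)$ so small that the refinement $\mathcal{R}_{i+1}$ is $\eta(t_i)$-regular, while keeping the part count at least $m$. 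The standard defect-Cauchy--Schwarz inequality shows that whenever a refinement $\mathcal{R}_{i+1}$ contains many irregular pairs, $\mathrm{ind}(\mathcal{R}_{i+1})-\mathrm{ind}(\mathcal{R}_i)$ exceeds a fixed positive constant; since the index is bounded by $1/2$, after a bounded number of steps (this bound produces $S=S(m,\E)$) we reach consecutive partitions $\A=\mathcal{R}_i$, with $\ell$ parts and $\ell\geq m$, together with an $\eta(\ell)$-regular refinement $\mathcal{Q}=\mathcal{R}_{i+1}$, for which $\mathrm{ind}(\mathcal{Q})-\mathrm{ind}(\A)<\delta$ for a threshold $\delta$ to be fixed in terms of $\E$.

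The key consequence of this small index gain is density stability. Viewing each cluster $V_i\in\A$ as a union of its sub-clusters in $\mathcal{Q}$, the cross-cluster part of $\mathrm{ind}(\mathcal{Q})-\mathrm{ind}(\A)$ is exactly the appropriately weighted variance of the sub-cluster densities $d(W_a,W_b)$ about the cluster densities $d(V_i,V_j)$. Hence $\mathrm{ind}(\mathcal{Q})-\mathrm{ind}(\A)<\delta$ forces that, for all but a small fraction of cluster pairs $(i,j)$, almost every cross sub-cluster pair $(W_a\subseteq V_i,\ W_b\subseteq V_j)$ has density within $\E$ of $d(V_i,V_j)$.

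It remains to select, for each $i$, one sub-cluster $U_i\subseteq V_i$ so that all $\binom{\ell}{2}$ pairs $(U_i,U_j)$ are simultaneously $\E$-regular and density-matching. I would do this probabilistically: choose $U_i$ uniformly at random among the sub-clusters contained in $V_i$, independently across $i$. Since $\mathcal{Q}$ has at most $\eta(\ell)\binom{|\mathcal{Q}|}{2}$ irregular pairs and each $V_i$ contains $|\mathcal{Q}|/\ell$ sub-clusters, the expected number of irregular transversal pairs is at most $\tfrac{1}{2}\eta(\ell)\ell^2$; taking $\eta(\ell)$ small relative to $\ell^{-2}$ (legitimate because $\ell\le S$ is bounded, and we may also assume $\eta(\ell)\le\E$ so that each $\eta(\ell)$-regular pair is a fortiori $\E$-regular) makes this expectation, together with the expected number of density-violating transversal pairs from the previous paragraph, less than one. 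Therefore some transversal makes every chosen pair $\E$-regular and leaves at most $\E\binom{\ell}{2}$ density-violating pairs. Since all sub-clusters of the equipartition $\mathcal{Q}$ have nearly equal size, trimming the $U_i$ to a common size $L$ (discarding at most one vertex per part) gives $L\ge\lceil n/S\rceil$, because $|\mathcal{Q}|\le S$; setting $U=\bigcup_i U_i$ then yields the required $\B=\{U_i\}$.

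The main obstacle is the joint parameter management. The iteration must be run with a rapidly decreasing \emph{function} $\eta(\cdot)$ of the current number of parts, rather than a fixed constant, so that the final value $\eta(\ell)$ can be taken small relative to $\ell^{-2}$ while $\ell$ itself still respects the bound $S$ produced by the index argument; this is what resolves the apparent circularity between $\eta$ and $\ell$. Arranging the single transversal selection to satisfy the regularity condition and the density condition at once, without letting either requirement inflate $S$ uncontrollably, is the delicate point, whereas the termination of the iteration and the density-stability estimate are the standard energy-increment bookkeeping.
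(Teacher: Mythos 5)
The paper does not prove this lemma: it is imported verbatim (in simplified form) from Alon, Fischer, Krivelevich and Szegedy \cite{reglemplus}, so there is no in-paper argument to compare against. Your proposal reconstructs essentially the proof from that source: iterate the classical regularity lemma with a regularity parameter $\eta(\cdot)$ that decays as a function of the current number of parts, stop when the index (energy) increment falls below a fixed threshold $\delta(\E)$, interpret the small increment as small weighted variance of sub-cluster densities about cluster densities, and extract the $U_i$ by a first-moment transversal selection. That is the right architecture, and the resolution of the $\eta$-versus-$\ell$ circularity (make $\eta$ a function of the part count, keep the stopping threshold $\delta$ a constant so the iteration count, and hence $S$, stays bounded) is exactly the crux.

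Two points need repair, though neither is fatal. First, your termination sentence is garbled: $\mathcal{R}_{i+1}$ is produced \emph{by} the regularity lemma, so it does not ``contain many irregular pairs''; the defect Cauchy--Schwarz inequality is not what terminates the iteration. Termination follows simply because each continued step increases the index by at least the fixed $\delta$, and the index is bounded by $1/2$, giving at most $1/(2\delta)$ rounds. Second, and more substantively, you cannot make the expected number of density-violating transversal pairs less than one. The variance bound only gives that the $d$-violating sub-pairs carry at most a $2\delta/\E^2$ fraction of the weight, so the expectation of the number $Y$ of violating selected pairs is about $(2\delta/\E^2)\binom{\ell}{2}$; forcing this below $1$ would require $\delta$ to depend on $\ell$, which reintroduces the circularity you correctly avoided for $\eta$. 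The fix is the standard weighted first-moment argument: with $X$ the number of irregular selected pairs, arrange $\mathbb{E}[X]<1/2$ (via $\eta(\ell)<\ell^{-2}$) and $\mathbb{E}[Y]\le(\E/2)\binom{\ell}{2}$ (via $\delta\le\E^3/4$, say), so that
$$ \mathbb{E}\left[X+\frac{Y}{\E\binom{\ell}{2}}\right]<1 , $$
and some transversal has $X=0$ and $Y<\E\binom{\ell}{2}$. This is precisely why the lemma tolerates $\E\binom{\ell}{2}$ exceptional pairs for the density condition while demanding regularity of \emph{all} pairs.
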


Since we want to establish that a large enough density in $G$ will
enable us to apply Tur\'an's Theorem, we need to bound $e(U)$ in
terms of $e(G)$.
\begin{lemma}
   Let  $m$ be  an integer, and $\E$ be a constant such that
    $m\geq\E^{-1}$ and let $S=S(m,\E)$ be the integer provided
    by Lemma \ref{reglem}.  Let $n\geq 2S/\E$ and $G$ be a graph on
    $n$ vertices.  Let $U$ an the induced subgraph
      and    an equipartition of $V(U)$ be
      $\left\{U_i\right\}_{i=1}^{\ell}$
       with $|U_1|=\ldots=|U_{\ell}|=L$, provided by Lemma \ref{reglem}. Let
        $\tilde{U}$ be the graph formed by deleting all edges
        which have both endpoints in the same set $U_i$ for $i=1,\ldots,\ell$.  In this case,
   $$ e(U)\geq e(\tilde{U})\geq e(G)\frac{\ell^2L^2}{n^2}-3\E\ell^2L^2 . $$
   \label{density}
\end{lemma}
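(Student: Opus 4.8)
The first inequality $e(U)\ge e(\tilde U)$ is immediate, since $\tilde U$ is obtained from $U$ by deleting edges. The plan for the second inequality is to (i) express $e(\tilde U)$ as a weighted sum of the pair densities $d(U_i,U_j)$, (ii) transfer from these subgraph densities to the original densities $d(V_i,V_j)$ using the two regularity conclusions of Lemma~\ref{reglem}, and (iii) compare the resulting quantity against $\frac{\ell^2L^2}{n^2}e(G)$, arranging so that every error term lands inside the budget $3\E\ell^2L^2$. Throughout I would record the basic size bound $\ell L=|V(\tilde U)|\le n$, so that $\frac{\ell^2L^2}{n^2}\le 1$ and all quantities are genuinely controlled.

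For steps (i)--(ii): since $\tilde U$ consists exactly of the cross-pair edges and each $|U_i|=L$, I would write
$$ e(\tilde U)=\sum_{i<j}e(U_i,U_j)=L^2\sum_{i<j}d(U_i,U_j). $$
On the at most $\E\binom{\ell}{2}$ exceptional pairs I discard the contribution using $d(U_i,U_j)\ge 0$, while on every remaining pair the last clause of Lemma~\ref{reglem} gives $d(U_i,U_j)>d(V_i,V_j)-\E$; using $d(V_i,V_j)\le 1$ to control the exceptional pairs when I reinsert the full sum over $V$, I expect to obtain
$$ e(\tilde U)\ \ge\ L^2\sum_{i<j}d(V_i,V_j)-\E\ell^2L^2, $$
where $\E\ell^2L^2$ collects the two contributions, each at most $\E L^2\binom{\ell}{2}$, coming from the $-\E$ per good pair and from dropping the exceptional pairs.

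For step (iii), I would decompose $e(G)=\sum_i e(V_i)+\sum_{i<j}e(V_i,V_j)$. Writing $e(V_i,V_j)=d(V_i,V_j)|V_i||V_j|$ and using that $\A$ is an equipartition, so $|V_i|\le\lceil n/\ell\rceil\le n/\ell+1$, I would bound the diagonal term by $\sum_i e(V_i)\le\frac{n^2}{2\ell}+O(n)$ and the cross weights by $|V_i||V_j|\le (n+\ell)^2/\ell^2$. Solving the resulting inequality for $\sum_{i<j}d(V_i,V_j)$ and scaling by $L^2$ should give
$$ L^2\sum_{i<j}d(V_i,V_j)\ \ge\ \frac{\ell^2L^2}{n^2}e(G)-\frac{\ell L^2}{2}-O\!\left(\frac{\ell^3L^2}{n}\right), $$
the term $\frac{\ell L^2}{2}$ being the scaled cost of the within-part edges and the $O(\ell^3L^2/n)$ term accounting for the equipartition rounding $|V_i||V_j|\ne (n/\ell)^2$.

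Combining the two displays, it remains to verify that the three errors $\E\ell^2L^2$, $\frac{\ell L^2}{2}$, and $O(\ell^3L^2/n)$ each fit inside $\E\ell^2L^2$, so their sum is at most $3\E\ell^2L^2$. This final bookkeeping is exactly where the hypotheses enter and is the part I expect to be most delicate. The within-part term obeys $\frac{\ell L^2}{2}\le\E\ell^2L^2$ precisely because $\E\ell\ge\E m\ge 1$, i.e.\ because $m\ge\E^{-1}$; the rounding term is of order $\ell^3L^2/n$ and is absorbed once $\ell/n\lesssim\E$, which follows from $\ell\le S$ together with $n\ge 2S/\E$. The main obstacle is thus not any single estimate but the careful tracking of the rounding discrepancies so that they do not overflow the $\E$-budget; everything else is a routine density computation.
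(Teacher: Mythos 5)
Your proposal is correct and follows essentially the same argument as the paper: the same identity $e(\tilde U)=L^2\sum_{i<j}d(U_i,U_j)$, the same use of the two regularity conclusions to pass between $d(U_i,U_j)$ and $d(V_i,V_j)$, the same decomposition of $e(G)$ into within-part and cross-pair edges, and the same absorption of the $\frac{\ell L^2}{2}$ and rounding terms via $\ell\ge m\ge\E^{-1}$ and $\ell/n\le S/n\le\E/2$. The only (immaterial) difference is direction: you lower-bound $e(\tilde U)$ directly, while the paper upper-bounds $e(G)$ in terms of $e(\tilde U)$ and then rearranges.
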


\begin{proof}
First, we bound $e(G)$, using the conditions in Lemma \ref{reglem}.
\begin{eqnarray}
   e(G) & \leq &
   \sum_i\binom{|V_i|}{2}
   +\sum_{1\leq i<j\leq\ell}e(V_i,V_j) \nonumber \\
   & \leq & \ell\frac{\left\lceil n/\ell\right\rceil^2}{2}
   +\E\binom{\ell}{2}\left\lceil\frac{n}{\ell}\right\rceil^2
   +\left\lceil\frac{n}{\ell}\right\rceil^2
    \sum_{1\leq i<j\leq\ell}\left[d(U_i,U_j)+\E\right] \nonumber \\
   & \leq & \frac{\ell}{2}\left\lceil\frac{n}{\ell}\right\rceil^2
   +2\E\binom{\ell}{2}\left\lceil\frac{n}{\ell}\right\rceil^2
   +{\left\lceil\frac{n}{\ell}\right\rceil^2}
    \sum_{1\leq i<j\leq\ell}d(U_i,U_j) \nonumber \\
   & \leq & \frac{\ell}{2}\left\lceil\frac{n}{\ell}\right\rceil^2
   +\E\ell^2\left\lceil\frac{n}{\ell}\right\rceil^2
   +\frac{\lceil n/\ell\rceil^2}{L^2}e(\tilde{U}). \label{eq:GU}
\end{eqnarray}

Recall that $L=|U_1|=\ldots=|U_{\ell}|$.  The calculations below,
use the fact that $\ell^{-1}\leq m^{-1}\leq\E$ and $\ell/n\leq
S/n\leq\E/2$.  By rearranging the terms in inequality (\ref{eq:GU}),
we get a lower bound for $e(\tilde{U})$.
\begin{eqnarray*}
   e(\tilde{U}) & \geq & e(G)\frac{L^2}{\left\lceil n/\ell\right\rceil^2}
   -\frac{\ell L^2}{2}-\E\ell^2L^2 \\
   & \geq & e(G)\frac{\ell^2L^2}{n^2}
   -e(G)\left(\frac{\ell^2L^2}{n^2}-\frac{L^2}{\lceil
   n/\ell\rceil^2}\right)-\frac{1}{2\ell}\ell^2L^2-\E\ell^2L^2 \\
   & \geq & e(G)\frac{\ell^2L^2}{n^2}
   -\frac{n^2}{2}\left(\frac{\ell^2L^2}{n^2}
                       -\frac{\ell^2L^2}{(n+\ell)^2}\right)
   -\frac{\E}{2}\ell^2L^2-\E\ell^2L^2 \\
   & = & e(G)\frac{\ell^2L^2}{n^2}
   -\left(1-\frac{n^2}{(n+\ell)^2}\right)\frac{\ell^2L^2}{2}
   -\frac{\E}{2}\ell^2L^2-\E\ell^2L^2 \\
   & \geq & e(G)\frac{\ell^2L^2}{n^2}-3\E\ell^2L^2.
\end{eqnarray*}

Trivially, $e(U)\geq e(\tilde{U})$ and this concludes the proof.
\end{proof}

Lemma \ref{regclique} establishes that a regular $f$-tuple will
induce any graph on $f$ vertices, given necessary density conditions.  We take the version from Alon and Shapira \cite{AlonShapira}. (It has appeared previously in other forms, see \cite{oldsurvey} and \cite{BolThom}.)
\begin{lemma}
   For every real $\eta$, $0<\eta<1$, and integer $f\geq 1$, there
   exists a $\gamma=\gamma(\eta,f)$ with the following property:
   Suppose that $H$ is a graph on $f$ vertices $v_1,\ldots,v_f$, and
   that $U_1,\ldots,U_f$ is an $f$-tuple of disjoint nonempty vertex
   sets of a graph $G$ such that for every $1\leq i<j\leq f$, the
   pair $(U_i,U_j)$ is $\gamma$-regular.  Moreover,
   whenever $(v_i,v_j)\in E(H)$ we have $d(U_i,U_j)\geq\eta$ and whenever $(v_i,v_j)\not\in E(H)$ we have $d(U_i,U_j)\leq 1-\eta$.
    Then, some $f$-tuple $u_1\in U_1,\ldots,u_f\in U_f$ spans an
   \textbf{induced} copy of $H$, where each $u_i$ plays the role of
   $v_i$.
\label{regclique}
\end{lemma}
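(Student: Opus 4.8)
The plan is to build the induced copy greedily, one vertex at a time, while carrying along a shrinking system of candidate sets. Fix $\gamma=\gamma(\eta,f)>0$ small enough that $0<\gamma<\eta$ and $(\eta-\gamma)^{f-1}>f\gamma$; the reason for this exact inequality will emerge below. Write $N(x)$ for the neighborhood of a vertex $x$ in $G$. The only property of regularity I need is the following one-sided statement: if $(A,B)$ is $\gamma$-regular with density $d=d(A,B)$ and $Y\subseteq B$ satisfies $|Y|>\gamma|B|$, then fewer than $\gamma|A|$ vertices $a\in A$ have $|N(a)\cap Y|<(d-\gamma)|Y|$, and fewer than $\gamma|A|$ have $|N(a)\cap Y|>(d+\gamma)|Y|$. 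Indeed, in either case the set $X$ of violators, if it had $|X|>\gamma|A|$, would satisfy $|d(X,Y)-d|\ge\gamma$, contradicting $\gamma$-regularity of $(A,B)$.

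I process $v_1,\dots,v_f$ in order and keep, for each not-yet-embedded index $j$, a candidate set $C_j\subseteq U_j$, initialized to $C_j=U_j$. After embedding $v_1,\dots,v_{t-1}$ as $u_1,\dots,u_{t-1}$ I maintain the invariant that (i) every $u\in C_j$ is adjacent to $u_i$ whenever $v_iv_j\in E(H)$ and non-adjacent to $u_i$ whenever $v_iv_j\notin E(H)$, for all $i<t$, and (ii) $|C_j|\ge(\eta-\gamma)^{t-1}|U_j|$ for every $j\ge t$. To embed $v_t$, I apply the regularity fact to each pair $(U_t,U_j)$ with $j>t$, taking $Y=C_j$; this is legitimate since $|C_j|\ge(\eta-\gamma)^{t-1}|U_j|>\gamma|U_j|$ by the choice of $\gamma$. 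For each such $j$ it discards fewer than $\gamma|U_t|$ vertices of $U_t$ as bad, where a good vertex $u_t$ satisfies $|C_j\cap N(u_t)|\ge(d(U_t,U_j)-\gamma)|C_j|\ge(\eta-\gamma)|C_j|$ when $v_tv_j\in E(H)$ (using $d(U_t,U_j)\ge\eta$) and $|C_j\setminus N(u_t)|\ge(1-d(U_t,U_j)-\gamma)|C_j|\ge(\eta-\gamma)|C_j|$ when $v_tv_j\notin E(H)$ (using $d(U_t,U_j)\le1-\eta$). Altogether fewer than $(f-t)\gamma|U_t|\le f\gamma|U_t|<(\eta-\gamma)^{f-1}|U_t|\le(\eta-\gamma)^{t-1}|U_t|\le|C_t|$ vertices of $U_t$ are bad, so a good $u_t\in C_t$ exists. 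I take it as the image of $v_t$ and update, for every $j>t$, $C_j\leftarrow C_j\cap N(u_t)$ on edges and $C_j\leftarrow C_j\setminus N(u_t)$ on non-edges; the displayed bounds show invariant (ii) survives with exponent $t$, and (i) survives by construction.

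Running this through $t=f$ — where no future constraints remain, so I only need $C_f\neq\emptyset$, which follows from $|C_f|\ge(\eta-\gamma)^{f-1}|U_f|\ge1$ once the $U_i$ are large enough (and in the degenerate small-set case a violator bound below $1$ forces zero violators, so nonemptiness is preserved anyway) — produces $u_1,\dots,u_f$ realizing exactly the adjacencies of $H$, i.e.\ an induced copy. I expect the one real obstacle to be the calibration of $\gamma$: it must be small enough that the geometric decay $(\eta-\gamma)^{t-1}$ of the candidate-set fractions never falls below the threshold $f\gamma$ needed to locate a good vertex while simultaneously respecting all $f-1$ constraints, and this is precisely the single inequality $(\eta-\gamma)^{f-1}>f\gamma$ imposed at the outset. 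Everything else is the standard greedy embedding, with the induced (as opposed to merely subgraph) conclusion coming for free from treating edge and non-edge constraints symmetrically via the two-sided regularity fact.
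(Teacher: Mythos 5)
Your argument is correct, but note that the paper itself gives no proof of this statement: Lemma~\ref{regclique} is quoted verbatim from Alon and Shapira \cite{AlonShapira} (with earlier variants in \cite{oldsurvey} and \cite{BolThom}) and used as a black box. What you have written is essentially the standard greedy embedding proof that underlies those references: a two-sided ``typical vertex'' consequence of $\gamma$-regularity (at most $\gamma|A|$ vertices of $A$ have too few, and at most $\gamma|A|$ have too many, neighbours in any $Y\subseteq B$ with $|Y|>\gamma|B|$), combined with shrinking candidate sets $C_j$ whose sizes decay geometrically like $(\eta-\gamma)^{t-1}|U_j|$, and the single calibration $(\eta-\gamma)^{f-1}>f\gamma$ that keeps the candidate sets above the regularity threshold and above the count of bad vertices. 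The bookkeeping checks out: treating edges and non-edges symmetrically is exactly what upgrades the conclusion from a subgraph copy to an induced copy, and since each surviving candidate set has size at least $(\eta-\gamma)|C_j|>0$, integrality keeps every $C_j$ nonempty even when the $U_i$ are small, so the lemma's weak hypothesis (``nonempty'') is genuinely covered. Two cosmetic points: the violator bound should read ``at most $\gamma|A|$'' rather than ``fewer than'' (regularity is only applicable to sets of size strictly greater than $\gamma|A|$), which is harmless because your final inequality is strict; and it is worth stating explicitly that a suitable $\gamma$ exists because $(\eta-\gamma)^{f-1}\to\eta^{f-1}>0$ while $f\gamma\to 0$ as $\gamma\to 0^{+}$. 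Your proof buys self-containedness; the paper's citation buys brevity and defers these (standard but fiddly) estimates to the literature.
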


The Slicing Lemma (Fact 1.5 in~\cite{oldsurvey}) is a basic fact of
regular pairs, common in proofs involving the Regularity Lemma.
\begin{lemma}[Slicing Lemma]
   Let $(U_i,U_j)$ be an $\alpha$-regular pair with density $d$ and
   $|U_i|=|U_j|=L_0$.  If $X\subseteq U_i$ and $Y\subseteq U_j$ with
   $|X|\geq L_i$ and $|Y|\geq L_j$, then $(X,Y)$ is $\alpha'$-regular, where
   $$ \alpha'=\max\left\{2\alpha, \frac{L_0}{L_i}\,\alpha,
   \frac{L_0}{L_j}\,\alpha\right\} , $$
   with density in $(d-\alpha,d+\alpha)$.
   \label{slicing}
\end{lemma}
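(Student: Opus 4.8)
The plan is to unwind the definition of $\alpha'$-regularity and reduce the whole statement to two applications of the $\alpha$-regularity of the original pair $(U_i,U_j)$, glued together by the triangle inequality. First I would dispose of the trivial case: if $\alpha'\geq 1$ then the conclusion is vacuous, since no subset $X'\subseteq X$ can satisfy $|X'|>\alpha'|X|\geq|X|$; so I may assume $\alpha'<1$. In particular $\frac{L_0}{L_i}\alpha<1$ and $\frac{L_0}{L_j}\alpha<1$, hence $L_i>\alpha L_0$ and $L_j>\alpha L_0$, which gives $|X|\geq L_i>\alpha L_0=\alpha|U_i|$ and likewise $|Y|>\alpha|U_j|$. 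The density claim is then immediate: since $X\subseteq U_i$ and $Y\subseteq U_j$ exceed the $\alpha$-threshold, $\alpha$-regularity of $(U_i,U_j)$ yields $|d(U_i,U_j)-d(X,Y)|<\alpha$, i.e. $d(X,Y)\in(d-\alpha,d+\alpha)$.

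For the regularity of $(X,Y)$ itself, I would fix arbitrary $X'\subseteq X$ and $Y'\subseteq Y$ with $|X'|>\alpha'|X|$ and $|Y'|>\alpha'|Y|$, and aim to bound $|d(X,Y)-d(X',Y')|$ by $\alpha'$. The key observation is that these subsets, viewed inside the original sets, still lie above the $\alpha$-threshold. Using $|X|\geq L_i$ together with $\alpha'\geq\frac{L_0}{L_i}\alpha$, I get
$$ |X'|>\alpha'|X|\geq\frac{L_0}{L_i}\,\alpha\cdot L_i=\alpha L_0=\alpha|U_i|, $$
and symmetrically $|Y'|>\alpha|U_j|$. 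A second application of $\alpha$-regularity of $(U_i,U_j)$ therefore gives $|d(U_i,U_j)-d(X',Y')|<\alpha$.

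Combining the two estimates through the triangle inequality,
$$ |d(X,Y)-d(X',Y')|\leq|d(X,Y)-d(U_i,U_j)|+|d(U_i,U_j)-d(X',Y')|<2\alpha\leq\alpha', $$
where the final inequality is exactly the $2\alpha$ term in the definition of $\alpha'$. This is precisely the bound required, so $(X,Y)$ is $\alpha'$-regular, completing the argument. There is no substantive obstacle here; the only point demanding care is the bookkeeping on set sizes, namely ensuring that a subset large \emph{relative to} $X$ or $Y$ remains large \emph{relative to} the original $U_i$ or $U_j$. This is exactly what forces the factors $\frac{L_0}{L_i}$ and $\frac{L_0}{L_j}$ into $\alpha'$, while the factor $2$ absorbs the doubling incurred by the triangle inequality.
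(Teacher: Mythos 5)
Your proof is correct and is the standard argument for the Slicing Lemma; the paper itself offers no proof, simply citing it as Fact 1.5 of the Koml\'os--Simonovits survey. Your size bookkeeping (reducing both the density claim and the regularity claim to two applications of the $\alpha$-regularity of $(U_i,U_j)$ plus the triangle inequality, with the harmless reduction to $\alpha'<1$) is exactly how that fact is established.
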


\section{Proof of Theorem \ref{mainth}}
\label{sec:proofs}

As in many proofs involving the Regularity Lemma, there is a
sequence of constants.  Fix an $F\in\mathcal{F}$ with chromatic number
$k$ and order $f$.  With $a\ll b$ meaning that $a$ is small enough
relative to $b$, the constants are
$$ \epsilon\ll\E_0\ll\E_1\ll\eta\ll\delta\ll f^{-1} . $$
We will determine the precise relationships later.

Let $k\geq 3$.  In order to establish the lower bound for $\tau_E(\pnf)$, Avoider
equipartitions the $n$ vertices into $k-1$ clusters, and   chooses
edges only  between different clusters.  By this strategy, Avoider's
graph will always be $(k-1)$-colorable, thus, have no member of $\F$
as a subgraph.  Avoider can make at least $\lfloor
(1/2)t(n,k-1)\rfloor$ moves.  Careful calculations establish that
$$ t(n,k)=\left(\frac{k-1}{k}\right)\frac{n^2}{2}-\frac{k}{2}
   \left(\left\lceil\frac{n}{k}\right\rceil-\frac{n}{k}\right)
   \left(\frac{n}{k}-\left\lfloor\frac{n}{k}\right\rfloor\right) .
   $$

Having established the first inequality of Theorem \ref{mainth} for $k\geq 3$,
 we assume for the rest of the proof that $k\geq 2$. Since the existence of an induced
 copy of some $F\in\F$ implies the
existence of a copy of $F$ as a subgraph, it is trivial that
$\tau_E(\pnf)\leq\tau_E(\pnfind)$.

Finally, we shall prove the upper bound.  Among all $F\in\F$ choose
an  $F$ with $\chi(F)=k$ and let $f=|V(F)|$.  Enforcer will play the game JumbleG$(\epsilon)$,
 and we shall prove that Avoider's graph, after $\left(\frac{k-2}{4(k-1)}+o(1)\right)n^2$ rounds
 will contain $F$ as an induced subgraph.  Assume, for the sake of contradiction, that for (a small)
  $\delta>0$, Avoider managed to build a graph $G$ of order
$n$ (where $n$ is sufficiently large) and $\left(\frac{k-2}{4(k-1)}+2\delta\right)n^2$ edges.
  Apply the Regularity Lemma (Lemma
\ref{reglem}) to Avoider's graph with parameters $\E_0$ and
$m:=\max\{k,\lceil \E_0^{-1}\rceil \}$, (where $\E_0$ will be
determined later) which yields the subsets $U_1,\ldots,U_{\ell_0}$
and a constant $S_0=S_0(m,\E_0)$ so that
$|U_1|=\ldots=|U_{\ell_0}|=L_0\geq n/S_0$. Construct an auxiliary graph
$H$ on the vertex set $\{1,\ldots,\ell_0\}$ where $i\sim j$ if and
only if $d(U_i,U_j)\geq\delta+\epsilon$.  Let $\tilde U$ be the
graph induced by $U_1\cup\ldots\cup U_{\ell_0}$, with the edges
inside of each cluster deleted. We use $e(\tilde U)$ in order to
compute $e(H)$.  Observe that Lemma \ref{jumble} gives that
$d(U_i,U_j)\leq(1/2+\epsilon)$ as long as $L_0\geq n/S_0>\epsilon
n$.
\begin{eqnarray*}
   e(\tilde U) & \leq & e(H)\left(\frac{1}{2}+\epsilon\right)L_0^2
   +\left[\binom{\ell_0}{2}-e(H)\right](\delta+\epsilon)L_0^2 \\
   & \leq & e(H)\left(\frac{1}{2}-\delta\right)L_0^2
   +\frac{\delta+\epsilon}{2}\,\ell_0^2L_0^2.
\end{eqnarray*}
We use Lemma \ref{density} to bound $e(\tilde U)$ by $e(G)$.
\begin{eqnarray}
   e(H)  \geq
   \frac{e(\tilde U)-\frac{\delta+\epsilon}{2}\,\ell_0^2L_0^2}
        {\left(\frac{1}{2}-\delta\right)L_0^2}
    \geq  \frac{e(G)\,\frac{\ell_0^2}{n^2}
                  -3\E_0\ell_0^2
                  -\frac{\delta+\epsilon}{2}\,\ell_0^2}
                 {1/2-\delta} . \label{eq:eH}
\end{eqnarray}
If \begin{equation} \delta\geq 2\E_0+\frac{\epsilon}{3},
\qquad\text{which is equivalent to}\qquad
2\delta\geq\frac{\delta}{2}+3\E_0+\frac{\epsilon}{2} ,
\label{bd:Eep}
\end{equation}
then
$$ e(G)\geq\left(\frac{k-2}{4(k-1)}+2\delta\right)n^2>
   \left(\frac{k-2}{4(k-1)}+3\E_0
         +\frac{\delta+\epsilon}{2}\right)n^2 . $$
Plugging this into (\ref{eq:eH}), we obtain that
$e(H)>\left(\frac{k-2}{k-1}\right)\frac{\ell_0^2}{2}$. Therefore,
$H$ contains a  $K_k$  by Tur\'an's Theorem.  Without loss of
generality, this copy of $K_k$ is spanned by
$\left(U_1,\ldots,U_k\right)$. Each pair $(U_i,U_j)$ is
$\E_0$-regular with density in the interval
$(\delta+\epsilon,1/2+\epsilon)$. \\

\noindent\textbf{Note:} At this stage of the proof, we could use, say, the Blow-up
lemma (see \cite{oldsurvey}) to show that if $n$ is large enough, a
 not necessarily induced copy of  $F$ occurs as a subgraph in $\left(U_1,\ldots,U_k\right)$,
hence in $G$. This provides an upper bound for $\tau_E(\pnf)$.
 However, we want the stronger result that produces an upper bound for $\tau_E(\pnfind)$.
 To prove such a result, we will apply Szemer\'edi's Regularity Lemma inside the clusters that
 were formed by its first application.  An alternative way to finish the proof would have been to use the Erd\H os-Stone Theorem instead of Tur\'an's Theorem.  \\

So, we apply the Regularity Lemma (Lemma \ref{reglem}) to the portion of
Avoider's graph in $U_i$ with parameters $\E_1$ and $m=f$, for each
$i=1,\ldots,k$ (where $\E_1$ will be determined later), which yields
the constants $\ell_i$, the  subclasses
$U_{i,1},\ldots,U_{i,\ell_i}$ and a constant $S_1=S_1(m,\E_1)$ so that
$|U_{i,1}|=\ldots=|U_{i,\ell_i}|=L_i\geq L_0/S_1\geq n/(S_0S_1)$. If
\begin{equation} \epsilon\leq S_0^{-1}S_1^{-1} \label{bd:eps}
\end{equation} then each pair $(U_{i,x},U_{j,y})$ has density at
most $1/2+\epsilon$, by Lemma \ref{jumble}.

Each pair $(U_{i,x},U_{i,y})$ is $\E_1$-regular.  Now consider a
pair $(U_{i,x},U_{j,y})$ where $i\neq j$.  The pair $(U_i,U_j)$ is
$\E_0$-regular with density at least $\delta+\epsilon$. Using the
Slicing Lemma (Lemma \ref{slicing}), the pair $(U_{i,x},U_{j,y})$ is
$\E_0L_0\cdot\max\{L_i^{-1},L_j^{-1}\}$-regular.  Since $L_i,L_j\geq
L_0/S_1$, the pair $(U_{i,x},U_{j,y})$ is $\E_0 S_1$-regular with
density at most $1/2+\epsilon$ and at least $\delta/2$, as long as
\begin{equation} \E_0\leq\delta/2+\epsilon . \label{bd:E0}
\end{equation}

Finally, we apply Lemma~\ref{regclique} with $\eta=\delta/2$ to $F$
and the tuple $\{U_{i,x}\}_{1\le i\le k, 1\le x\le \ell_i}$. This
implies the existence of the constant
$\gamma=\gamma\left(f,\delta/2\right)$. As $F$ is $k$-colorable
graph of order $f$, and $f\le \ell_i$ for every $i$, all pair of
classes are $\E_0 S_1$-regular. In order to apply
Lemma~\ref{regclique}, it is sufficient to have
\begin{equation}
\E_0 S_1\leq\gamma\left(f,\delta/2\right) . \label{bd:E0gam}
\end{equation}
This way $F$ can be embedded into $G$, Avoider's graph. \\

\textbf{The order of choosing the constants.} First, a forbidden
graph $F$ with chromatic number $k$, and order $f$ is fixed, 
and an arbitrary $\delta>0$ is chosen, assuming that Avoider built a
graph with $\left(\frac{k-2}{4(k-1)}+2\delta\right)n^2$ edges, where
$n>n(f,\delta).$ Note that if $\delta$ is too large, i.e.,
$\left(\frac{k-2}{4(k-1)}+2\delta\right)n^2>\binom{n}{2}/2$, then
our theorem is true by default. Then from Lemma~\ref{regclique},
with $\eta=\delta/2$ we obtain the existence of a $\gamma$, which is
less than $m^{-1}$. The
 Regularity Lemma (Lemma~\ref{reglem}) is applied to the classes
 $U_i$ with $\E_1=\gamma$ and $m=f$. This also implies the existence
 of an $S_1=S_1(\E_1,m)$.
  We need that the pairs $(U_{i,x}, U_{j,y})$ are
$\gamma$-regular, hence $\E_0 S_1\leq\gamma$ needed, so a positive
constant $\E_0< \gamma/S_1$ is chosen. The
 Regularity Lemma (Lemma~\ref{reglem}) is applied to  $G$ with $\E_0$ and $m=f$.
  This also implies the existence
 of an $S_0=S_0(\E_0,m)$.
For JumbleG$(\epsilon)$, we use $\epsilon=S_0^{-1}S_1^{-1}$. In order to apply
all these Lemmas, we need to assume that $n$ is sufficiently large.
This specific choice of constants satisfies the inequalities
(\ref{bd:Eep}), (\ref{bd:eps}),
(\ref{bd:E0}) and (\ref{bd:E0gam}).\\

To summarize, we proved that  for every $\delta>0$, if the
Avoider-Enforcer induced-$\F$-avoidance game is played for
$\left(\frac{k-2}{4(k-1)}+2\delta\right)n^2$ rounds, then the
Avoider's graph will contain an induced $F$, and the Enforcer will
win. Thus, $\tau_E(\pnfind)\leq\left(\frac{k-2}{k-1}\right)\frac{n^2}{4}+o(n^2)$. \proofend

\section{Concluding remarks}
\label{sec:conc}

The game $\NC^1_n$ is trivial, but the $\pnf$ question is not  for
$k=2$.  That is, the case in which $\F$ contains a bipartite graph.
Theorem \ref{mainth} gives only that, if $k=2$ and $\{F\}$ contains
a bipartite graph, then
$$ \tau_E(\pnf)\leq\tau_E(\pnfind)\leq
o(n^2) , $$ which is not very helpful. And certainly in the
non-induced case, we have trivially
$$\tau_E(\pnf)\leq t(n,F),$$
where $t(n,F)$ denotes the maximum number of edges in a graph with
$n$ vertices with no $F$ as a subgraph.

The authors believe the approach needed to prove
Theorem~\ref{mainth} may solve some other problems in the area of
positional
games,
  as our proof is  the first
instance of Szemer\'edi's Regularity Lemma being used in the context of positional game theory.

Finally, we remark that one can prove the non-induced case of $k=3$ without the use of the
 Szemer\'edi Regularity Lemma. \\


{\bf Acknowledgements:} We would like to thank Mathematisches
Forschungsinstitut Oberwolfach for hosting a miniworkshop on
positional games,
where we started our project. We are indebted to  Danny Hefetz,
Michael Krivelevich, Milo\u{s} Stojakovi\'c, and Tibor Szab\'o\ for
their helpful remarks.  We also thank the anonymous referees for
their valuable
 comments on the manuscript.

\end{document}